\theoremstyle{plain}
\newtheorem{theorem}{Theorem}[section]
\newtheorem{lemma}[theorem]{Lemma}
\theoremstyle{remark}
\newtheorem{remark}[theorem]{Remark}
\title[Locally finite lattices]{Locally finite sublattices of free lattices}
\author{Brian T. Chan}
\address{
 Department of Mathematics \\
 University of British Columbia\\
 Vancouver BC V6T 1Z2, Canada}
\email{bchan@math.ubc.ca}
\date{\today}
\subjclass[2010]{06B05, 06B20, 06B25}
\keywords{locally finite posets, locally finite lattices, free lattices, sublattices of free lattices, countability conditions}
\thanks{The author was supported in part by the Natural Sciences and Engineering Research Council of Canada \includegraphics[scale = 0.2]{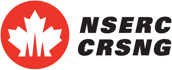} [funding reference number PGSD2 - 519022 - 2018].}
\begin{document}

\begin{abstract} The problem of determining which infinite lattices are (isomorphic to) sublattices of free lattices is in general unsolved and extremely difficult. In this note, we reduce the problem by proving that all locally finite sublattices of free lattices are countable by using a result from Baldwin, Berman, Glass and Hodges on free algebras.
\end{abstract}
\maketitle
%
%%%%%%%%%%%%%%%%%%%%%%%%%%%%%%%%%%%%%%%%%%%%%%%%%%%%%%%%%%%%%%%%%%%%%%%%%

\section{Introduction}

Free lattices have been the subject of much investigation within lattice theory, with Whitman introducing \emph{Whitman's condition} \cite{Whitman 1, Whitman 2} and J\'onsson introducing \emph{semidistributive lattices} to study properties of free lattices \cite{Semidistributive origins 1, Semidistributive origins 2}. An important problem within the theory of free lattices that has received a lot of attention over the years is the problem of determining, up to isomorphism, sublattices of free lattices \cite{FL}. \\

The majority of what is known about sublattices of free lattices is based on what we know about finite sublattices of free lattices, extensions are known for finitely generated sublattices of free lattices and projective lattices \cite{FL}. Finite sublattices of free lattices can be characterized by using Whitman's condition and a property involving \emph{join covers} of elements \cite{FL}. Later on, this characterization was strenghened to requiring only the semidistributive laws and Whitman's condition \cite{Nations proof}. Another aspect of free lattices that was discovered is as follows \cite{FL}. It is known that group actions on free lattices can be used to prove that any chain in a free lattice is countable \cite{Distributive sublattices} and that every free lattice is a countable union of antichains \cite{FL}. In this note, we add to what is known about sublattices of free lattices by proving that all locally finite sublattices of free lattices are countable by using a result from Baldwin, Berman, Glass and Hodges on free algebras.

\section{Preliminaries}

Let $\mathbb{N}$ denote the set of positive integers. In introducing posets and lattices, we use \cite{ILO} as a reference. A \emph{poset} $(P, \leq)$ is a set $P$ equipped with a \emph{partial order} $\leq$ on $P$ (a binary relation on $P$ that is \emph{reflexive}, \emph{antisymmetric}, and \emph{transitive}). For convenience, we usually write $P$ to denote a poset $(P, \leq)$. In particular, we write $a \in P$ and $S \subseteq P$ to denote an element of and a subset of the \emph{set} $P$ respectively in the poset $(P, \leq)$. If $(P, \leq)$ is a poset and if $a,b \in P$, then we write $a \parallel b$ to indicate that $a \leq b$ is false and that $b \leq a$ is false. We also write $a \geq b$ to mean that $b \leq a$, $a < b$ to mean that $a \leq b$ and $a \neq b$, and $a > b$ to mean that $b < a$. \\

A \emph{lattice} is a poset $L$ such that any finite subset $S$ of $L$ has a \emph{greatest lower bound} $\bigwedge S$ in $L$ and a \emph{least upper bound} $\bigvee S$ in $L$. If $S = \{a, b\}$ and $a \neq b$, then write $\bigvee S = a \vee b$ and $\bigwedge S = a \wedge b$. Moreover, for all $a \in L$, write $a \vee a = a$ and $b \wedge b = b$. For clarity, we will sometimes say \emph{in $P$} or \emph{in $L$} when describing $P$ and $L$. If $L$ is a lattice, and if $S \subseteq L$ is such that for all finite subsets $S_0$ of $S$, the least upper bound of $S_0$ in $L$ is in $S$ and the greatest lower bound of $S_0$ in $L$ is in $S$, then a \emph{sublattice of $L$} is the set $S$ equipped with the partial order defined, for all $a, b \in S$, by $a \leq b$ if $a \leq b$ in $L$. All sublattices of lattices are lattices. When describing posets and lattices, we will often just write $P$ and $L$ rather than $(P, \leq)$ and $(L, \leq)$. Hence, if $P$ is a poset, then we write $a \in P$ to denote an element of \emph{the set} $P$ and we write $S \subseteq P$ to denote a subset of \emph{the set} $P$. Moreover, if $P$ is a poset and if $S \subseteq P$, then we let $P \backslash S$ denote the set of elements in $P$ that are not in $S$. Lastly, we sometimes say \emph{in $P$} or \emph{in $L$} for clarity when describing elements or inequalities. \\

Let $P$ be a poset. If $a \in P$, then $a$ is a \emph{minimal} element of $P$ if for all $b \in P$, $b \nless a$.  An \emph{antichain} in $P$ is a subset $S \subseteq P$ such that $a \parallel b$ for all distinct $a, b \in S$. Moreover, a \emph{chain in $P$} is a subset $S \subseteq P$ such that for all distinct $a, b \in S$, $a \leq b$ or $b \leq a$. If $a, b \in P$, then $b$ \emph{covers $a$ in $P$} (or $a$ is \emph{covered by $b$ in $P$}) if $a \leq b$ in $P$ but no element $c \in P$ satisfies $a < c < b$ in $P$. We write $a \prec b$ if $a$ is covered by $b$ in $P$, and we write $a \succ b$ if $a$ covers $b$ in $P$. Furthermore, if $a, b \in P$ are such that $a \leq b$, then an \emph{interval $[a, b]$ of $P$} is the following set of elements $\{c \in P : a \leq c \leq b \text{ in } P \}$. Lastly, if $a \in P$, then write $\downarrow_P a = \{b \in P : b \leq a \}$ and write $\uparrow_P a = \{b \in P : b \geq a\}$.  \\

A poset $P$ is \emph{locally finite} if for all $a,b \in P$ such that $a \leq b$, the interval $[a,b]$ is finite. If $K$ and $L$ are lattices, then a \emph{lattice homomorphism $f : K \to L$} is a function from the set of elements of $K$ to the set of elements of $L$ such that for all $a,b \in K$, $f(a \vee b) = f(a) \vee f(b)$ and $f(a \wedge b) = f(a) \wedge f(b)$. If $K$ and $L$ are lattices, then $K$ is \emph{isomorphic to} $L$ if there exist lattice homomorphisms $f : K \to L$ and $g : L \to K$ such that $f$ and $g$ are bijections, $g \circ f$ is the identity map on $K$, and $f \circ g$ is the identity map on $L$. \\

If $S$ is a set, then a \emph{free lattice on $S$} is a lattice $FL(S)$ that satisfies the following \emph{universal property}. For all lattices $L$ and for all functions $f$ from $S$ to the set of elements of $L$, there exists a unique lattice homomorphism $g : FL(S) \to L$ such that for all $s \in S$, $g(s) = f(s)$ (\cite{FL}, p. 136). Any two free lattices on $S$ are isomorphic, so we say that $FL(S)$ is \emph{the} free lattice on $S$ (\cite{FL}, p. 136). The free lattice $FL(S)$ is also written as $FL(|S|)$. Free lattices can also be defined using equivalence classes on the set of well-formed finite words constructed with the elements of $S$, the symbols $\vee$ and $\wedge$, and parenthesis \cite{FL}. For free lattices, we will slightly abuse terminology and say that a lattice $L$ is a \emph{sublattice of a free lattice} if $L$ is \emph{isomorphic} to a sublattice of $FL(S)$ for some set $S$. \\

We will use some notions from $ZFC$ set theory \cite{Set theory text}. By \emph{maximal} subsets we mean subsets that are maximal with respect to set inclusion. Recall that $ZFC$ denotes \emph{Zermelo-Fraenkel set theory with the Axiom of Choice}, recall the notion of an \emph{ordinal number}, recall the finite ordinal numbers $0$, $1$, $2$, $\dots$, and recall the ordinal number $\omega = \{0, 1, 2, \dots\}$. In particular, we will consider finite ordinal numbers as non-negative integers and vice-versa. Moreover, recall the notion of a \emph{well-ordered set}, recall the notion of an \emph{order type}, recall that $ZF$ denotes \emph{Zermelo-Fraenkel set theory without the Axiom of Choice}, and recall that the Axiom of Choice is logically equivalent to the \emph{Hausdorff Maximal Principle} over $ZF$. A \emph{tree} is a poset $T$ in which every subset $\{y \in T : y < x \}$ is well-ordered for all $x \in T$. Let $T$ be a tree. A \emph{branch} of $T$ is a maximal chain in $T$ and the \emph{length} of a branch is the order type of that branch. Moreover, the \emph{height} of an element $x \in T$ is the order type of $\{y \in T : y < x\}$, and the \emph{height} of $T$ is the least ordinal that is greater than the height of any element of $T$. Given an ordinal $\alpha$, the \emph{$\alpha^{th}$ level} of $T$ is the set of elements of $P$ with height $\alpha$. Lastly, a \emph{root} of $T$ is a minimal element of $T$. \\

\section{Locally finite sublattices}

In this section, we prove that all locally finite sublattices of free lattices are countable using the Axiom of Choice and a result from Baldwin, Berman, Glass and Hodges on free algebras. 

\begin{theorem}(Baldwin, Berman, Glass, and Hodges \cite{Baldwin et al}) \label{free algebras} In a free lattice $FL(S)$, it is impossible for there to be an uncountable subset $Y \subseteq FL(S)$ and an element $c \in FL(S)$ such that $a \wedge b = c$ for all distinct $a,b \in Y$.
\end{theorem}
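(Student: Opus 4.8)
The plan is to reduce the statement, via the $\Delta$-system (sunflower) lemma, to a single fact about how the meet operation interacts with the \emph{support} of an element, and then to settle that fact using Whitman's solution to the word problem together with the substitution homomorphisms supplied by the universal property of $FL(S)$.

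First I would set up the bookkeeping. Suppose toward a contradiction that $Y$ is uncountable and $a\wedge b=c$ for all distinct $a,b\in Y$. Since $c=a\wedge b\le a$, every element of $Y$ lies above $c$; moreover $Y$ is an antichain, since $a\le b$ for distinct $a,b\in Y$ would give $a=a\wedge b=c$, and deleting $c$ itself (if it lies in $Y$) keeps $Y$ uncountable. For $w\in FL(S)$ let $\sigma(w)\subseteq S$ denote the finite, nonempty \emph{support} of $w$, i.e.\ the least $T\subseteq S$ with $w\in FL(T)$; I will use the standard facts that $\sigma(x\wedge y)\subseteq\sigma(x)\cup\sigma(y)$ (because $x,y\in FL(\sigma(x)\cup\sigma(y))$) and that $FL(T)$ is countable whenever $T$ is finite.

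The combinatorial heart is the sunflower reduction. The family $\{\sigma(a):a\in Y\}$ is an uncountable family of finite sets, so the $\Delta$-system lemma yields an uncountable $Y'\subseteq Y$ whose supports form a $\Delta$-system: there is a finite \emph{core} $R$ with $\sigma(a)\cap\sigma(b)=R$ for all distinct $a,b\in Y'$, and the \emph{petals} $P_a:=\sigma(a)\setminus R$ are pairwise disjoint. Those $a$ with $P_a=\emptyset$ all lie in the countable lattice $FL(R)$, so only countably many occur; deleting them leaves an uncountable $Y''$ with every petal nonempty. Picking any three distinct $a,b,d\in Y''$ and intersecting the three containments $\sigma(c)\subseteq\sigma(a)\cup\sigma(b)$, $\sigma(c)\subseteq\sigma(a)\cup\sigma(d)$, $\sigma(c)\subseteq\sigma(b)\cup\sigma(d)$ — whose pairwise unions meet in exactly $R$, the petals being disjoint — forces $\sigma(c)\subseteq R$.

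It then remains to prove the following \emph{support-persistence} lemma, which I expect to be the main obstacle: if $p\in\sigma(a)$, $p\notin\sigma(b)$, and $p\notin\sigma(a\wedge b)$, then $b\le a$. Granting it, fix distinct $a,b\in Y''$ and choose $p\in P_a$, $q\in P_b$; since the petals are disjoint, disjoint from $R$, and $\sigma(c)\subseteq R$, we have $p,q\notin\sigma(c)=\sigma(a\wedge b)$, together with $p\notin\sigma(b)$ and $q\notin\sigma(a)$. Applying the lemma twice gives $b\le a$ and $a\le b$, whence $a=b$, a contradiction. To prove the lemma I would observe that, because $b$ is $p$-free and $p\notin\sigma(a\wedge b)$, the substitution homomorphism $h_t$ fixing every generator except $p$ and sending $p\mapsto t$ satisfies $h_t(a)\wedge b=h_t(a\wedge b)=a\wedge b$ for every $t\in FL(S)$; it then suffices to produce some $t$ (built from the generators of $a$, $b$ and from $p$) large enough that $b\le h_t(a)$, for then $a\wedge b=h_t(a)\wedge b=b$. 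Controlling exactly how $p$ occurs in the canonical form of $a$ — equivalently, showing that a genuine occurrence of $p$ cannot be annihilated by meeting with a $p$-free element unless that element already lies below $a$ — is where Whitman's condition and an induction on canonical form must do the real work, and this is the step I expect to be the most delicate.
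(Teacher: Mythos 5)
First, note that the paper does not actually prove this statement: Theorem \ref{free algebras} is quoted as a special case of Corollary 6 of \cite{Baldwin et al}, so the only thing to compare your attempt against is the Baldwin--Berman--Glass--Hodges argument itself. Your $\Delta$-system reduction is essentially their opening move and is sound (one small point: to apply the $\Delta$-system lemma you need uncountably many \emph{distinct} supports, which follows because each fiber of $a\mapsto\sigma(a)$ lies in a finitely generated, hence countable, free lattice), and the deduction $\sigma(c)\subseteq R$ from three elements is correct. The problem is that your support-persistence lemma, which you correctly identify as the crux, is \emph{false}. Take $a=x\wedge(y\vee p)$ and $b=y$ in $FL(\{x,y,p\})$: here $p\in\sigma(a)$, $p\notin\sigma(b)$, and $a\wedge b=x\wedge(y\vee p)\wedge y=x\wedge y$ (the meetand $y\vee p$ is absorbed since $x\wedge y\le y\le y\vee p$), so $p\notin\sigma(a\wedge b)$; yet $y\le a$ would force $y\le x$, so $b\not\le a$. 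Worse, the failure persists even under the full $\Delta$-system hypotheses you have available for a \emph{pair} of elements: $a=x\wedge(y\vee p)$ and $b=y\wedge(x\vee q)$ are incomparable, have core $R=\{x,y\}$ and disjoint nonempty petals $\{p\}$, $\{q\}$, and satisfy $a\wedge b=x\wedge y$ with $\sigma(a\wedge b)\subseteq R$. So no repair of the lemma that looks only at two members of $Y''$ can close the argument; your proposed substitution proof was also already obstructed (there need be no $t$ with $b\le h_t(a)$, e.g.\ when $p$ occurs only inside a meet with an element not above $b$).

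The missing idea is a second, purely combinatorial refinement that makes the endomorphism argument trivial and avoids canonical forms entirely. Each $a\in Y''$ is the value of some lattice term $w_a(\bar r,\bar v)$ with $\bar r$ enumerating $R$ and the variables $\bar v$ filled by an enumeration of the petal $P_a$; since there are only countably many terms, some uncountable $Y'''\subseteq Y''$ has all its elements of the form $a=w(\bar r,\bar p_a)$ for one fixed term $w$. For distinct $a,b\in Y'''$, the endomorphism $g$ of $FL(S)$ that sends the $i$th petal variable of $b$ to the $i$th petal variable of $a$ and fixes every other generator satisfies $g(b)=a$, $g(a)=a$ (the petals are disjoint), and $g(c)=c$ (since $\sigma(c)\subseteq R$); hence $c=g(a\wedge b)=a\wedge a=a$. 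As this holds for every $a\in Y'''$, all elements of $Y'''$ equal $c$, contradicting uncountability. I would recommend replacing your support-persistence lemma with this refinement; as written, your proof has a genuine gap at its central step.
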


\begin{remark} Theorem \ref{free algebras} is a special case of Corollary 6 of \cite{Baldwin et al}. Moreover, in \cite{Baldwin et al}, Baldwin, Berman, Glass, and Hodges proved Theorem \ref{free algebras} by using a special case of Erd\"{o}s and Rado's \emph{$\Delta$-system Lemma} \cite{Erdos and Rado}.
\end{remark}

We first prove the following lemma.

\begin{lemma}\label{first step} Let $L$ be a locally finite sublattice of a free lattice. Moreover, assume that $L$ has exactly one minimal element. Then $L$ is countable.
\end{lemma}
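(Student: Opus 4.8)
The plan is to argue by contradiction and feed Theorem \ref{free algebras} with an uncountable family whose pairwise meets are all equal. First I would observe that a unique minimal element is automatically a least element: calling it $0$, for any $a \in L$ we have $a \wedge 0 \leq 0$, and minimality of $0$ forces $a \wedge 0 = 0$, i.e. $0 \leq a$. Local finiteness then makes every interval $[0,a]$ finite, so I can define a \emph{rank} $r(a)$ as the number of elements in a longest chain of $[0,a]$ minus one; this is a well-defined nonnegative integer with $r(0)=0$, and $a < b$ implies $r(a) < r(b)$ (extend a longest chain of $[0,a]$ by $b$). Stratifying $L = \bigcup_{n \geq 0} L_n$ with $L_n = \{a \in L : r(a) = n\}$, each $L_n$ is an antichain (comparable elements have distinct ranks), and since a countable union of countable sets is countable (the Axiom of Choice is available), it suffices to show every $L_n$ is countable. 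I would prove this by induction on $n$, the base case $L_0 = \{0\}$ being trivial.

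For the inductive step, assume $L_{n-1}$ is countable and, aiming for a contradiction, suppose $L_n$ is uncountable. For each $a \in L_n$ let $D(a) = \{x \in L_{n-1} : x < a\}$; this is a finite (it sits inside the finite set $[0,a]$) and nonempty (any longest chain in $[0,a]$ passes through a rank-$(n-1)$ element) subset of the countable set $L_{n-1}$. Since a countable set has only countably many finite subsets, the assignment $a \mapsto D(a)$ has countable range, so if $L_n$ were uncountable then some fiber $Y = \{a \in L_n : D(a) = D\}$ would be uncountable for a fixed finite set $D \subseteq L_{n-1}$.

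The crux is then a short rank computation showing $D$ collapses to a single element. Put $d = \bigvee D$ (a finite join, computed in $L$). Pick distinct $a, b \in Y$. Every $x \in D$ satisfies $x \leq a$ and $x \leq b$, so $d \leq a \wedge b \leq a$; and $d \neq a$, since $d = a$ would give $a \leq b$, contradicting that $L_n$ is an antichain. Hence $d < a$, so $r(d) \leq n-1$, while $d \geq x$ for rank-$(n-1)$ elements $x$ gives $r(d) \geq n-1$; thus $r(d) = n-1$ and $d \in D(a) = D$, which forces $D = \{d\}$ because all elements of $D$ share rank $n-1$. Now $d \prec a$ for every $a \in Y$ (nothing of rank strictly between $n-1$ and $n$ can exist), so $[d,a] = \{d,a\}$, and for distinct $a, b \in Y$ the element $a \wedge b$ lies in $[d,a]$ but differs from $a$, whence $a \wedge b = d$. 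This produces an uncountable $Y$ with all pairwise meets equal to $d$, contradicting Theorem \ref{free algebras} (meets in the sublattice agree with meets in the ambient free lattice, so the theorem applies directly). I expect the rank bookkeeping that collapses $D$ to the singleton $\{d\}$ to be the only delicate point; the stratification and the pigeonhole on fibers are routine once the rank function is in place.
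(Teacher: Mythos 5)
Your proof is correct, and it ultimately runs on the same engine as the paper's --- Theorem \ref{free algebras} applied to an alleged uncountable family of elements all covering a fixed element $d$, whose pairwise meets are then forced to equal $d$ --- but the way you manufacture that family is genuinely different. The paper uses the Axiom of Choice to assemble a spanning tree $T$ on $L$, rooted at the minimal element, whose cover relation refines that of $L$; each level of $T$ is then a union, over the inductively countable previous level, of children-sets $S_a$, each countable by the theorem. You instead stratify by the canonical rank $r(a)$ (longest chain in $[0,a]$) and argue contrapositively: an uncountable rank level $L_n$ would, by pigeonhole over the countably many possible finite sets $D(a)$ of rank-$(n-1)$ elements below $a$, contain an uncountable fiber $Y$ with constant $D(a)=D$, and your computation showing $\bigvee D \in D$, hence $D=\{d\}$ with $d \prec a$ for all $a \in Y$, produces the forbidden family. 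This buys a cleaner construction --- the rank function is canonical, so you avoid the paper's somewhat informal assembly of the trees $T_k'$ and their limit tree --- at the price of the short extra argument collapsing $D$ to a singleton; both routes still need countable choice for ``a countable union of countable sets is countable,'' and both need the observation (which you supply via $[d,a]=\{d,a\}$) that two distinct elements covering $d$ meet exactly in $d$. Your closing remark that meets in $L$ agree with meets in the ambient free lattice, so that Theorem \ref{free algebras} applies to $Y$, is exactly the right thing to check.
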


\begin{proof} Assume without loss of generality that $L$ is infinite, and let $a_0$ be the unique minimal element of $L$. Because $L$ is locally finite, it follows that for all $a \in L$ such that $a \neq a_0$, there exists a positive integer $n$ and elements $a_1, a_2, \dots, a_n \in L$ such that $a_n = a$ and $a_0 \prec a_1 \prec a_2 \prec \dots \prec a_n$ in $L$. Hence, there is a sequence of finite trees $T_0'$, $T_1'$, $\dots$ such that the set of elements in $T_k'$ is a subset of $L$ for all $k \in \omega$, $L = \bigcup_{k \in \omega} T_k'$ as sets, and all of the following hold for all $k \in \omega$. The tree $T_k'$ has a unique root, $a_0$ is the root of $T_k'$, $T_k'$ is of height $k + 2$, for all $a,b \in T_k'$, $a \prec b$ in $T_k'$ implies that $a \prec b$ in $L$, and for all $a,b \in T_k'$, $a < b$ in $T_k'$ if and only if $a < b$ in $T_{k+1}'$. So by the Axiom of Choice, there exists a tree $T$ such that the set of elements in $T$ is the set of elements in $L$, $T$ has exactly one root, $a_0$ is the root of $T$, and for all $a, b \in T$, $a \prec b$ in $T$ implies that $a \prec b$ in $L$. \\

Because the length of every branch of $T$ is at most $\omega$, the height of $T$ is at most $\omega$. For all $k \in \omega$, let $T_k$ denote the $k^{th}$ level of $T$. Then, $T = \bigcup_{k \in \omega} T_k$ as sets. For all $k \in \omega$ and for all $a \in T_k$, consider the set $S_a = \{b \in T_{k+1} : a \prec b \text{ in } T \}$. As $a \prec b$ in $L$ for all $b \in S_a$, it follows that for all $b_1, b_2 \in S_a$, $b_1 \wedge b_2 = a$ in $L$. Hence, by Theorem \ref{free algebras}, $S_a$ is countable. Lastly, note that $T_0$ is countable because $T_0 = \{a_0\}$. So as the countable union of countable sets is always countable, it follows that all levels of $T$ are countable. Hence, as $T = \bigcup_{k \in \omega} T_k$ as sets, $T$ is countably infinite. Therefore, as the set of elements in $T$ equals to the set of elements in $L$, $L$ is countably infinite.
\end{proof}

We now prove the main theorem of this note.

\begin{theorem} If $L$ is a locally finite sublattice of a free lattice, then $L$ is countable.
\end{theorem}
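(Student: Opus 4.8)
The plan is to reduce the general case to Lemma~\ref{first step}, which already handles locally finite sublattices with a unique minimal element. The essential obstruction is that an arbitrary locally finite sublattice $L$ of a free lattice may have many minimal elements (indeed uncountably many a priori), so I cannot directly root a single tree at one bottom element as in the lemma. The key observation is that $L$ is itself a lattice, so it has all finite meets; in particular, I want to manufacture a single smallest element to play the role of $a_0$.

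First I would consider the set $M$ of minimal elements of $L$ and try to control its size. Since $L$ is a sublattice of a free lattice, any two distinct minimal elements $a, b$ satisfy $a \wedge b \in L$, and $a \wedge b$ is a lower bound for both; I would examine the element $a \wedge b$ and argue it must itself be minimal (it lies below the minimal elements $a$ and $b$, forcing $a \wedge b$ to be a minimal element of $L$, and being below two distinct minimal elements it equals neither). The more robust route, which I expect to be the one that works cleanly, is to show directly via Theorem~\ref{free algebras} that $M$ is countable: for distinct minimal $a, b$ the meet $a \wedge b$ is some element $c \in L$, but I would need the meets to all equal a single common value $c$ to invoke the theorem, which is not automatic. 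So instead I would partition or filter the minimal elements so that the $\Delta$-system style conclusion applies.

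The cleaner strategy is to adjoin a new bottom. Let $\widehat{L}$ be the lattice obtained from $L$ by formally adding a least element $\widehat{0}$ below everything; this is still locally finite (the only new intervals are $[\widehat{0}, b]$, and each such interval is $\{\widehat{0}\} \cup \bigcup \{[a,b] : a \in M, a \leq b\}$ together with finitely many minimal elements below $b$, so I must verify finiteness here). The difficulty is that $\widehat{L}$ need not be a sublattice of a free lattice, so Lemma~\ref{first step} does not apply to it verbatim. However, the lemma's proof only uses local finiteness to build the tree and uses the free-lattice hypothesis solely to bound the fan-out $S_a$ at each node via Theorem~\ref{free algebras}; at the root $\widehat{0}$ the children are exactly the minimal elements of $L$, and for any two distinct minimal elements $b_1, b_2$ their meet in $\widehat{L}$ is $\widehat{0}$. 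Thus I would re-run the tree construction of Lemma~\ref{first step} on $\widehat{L}$: every non-root fan-out $S_a$ lies inside $L$ with common meet $a$ in $L$ (so Theorem~\ref{free algebras} bounds it), and the root fan-out is countable provided the set of minimal elements is countable.

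This reduces everything to proving $M$ is countable, which I expect to be the main obstacle. To finish, I would argue as follows: fix any element $a_0 \in L$, and for each minimal element $m$ consider $m \vee a_0 \in L$. Since $L$ is locally finite, each interval $[a_0, m \vee a_0]$ is finite, and I would try to show the map $m \mapsto m \vee a_0$ has countable image with finite fibers, or alternatively apply Theorem~\ref{free algebras} after verifying that infinitely many minimal elements would force an uncountable family with a common meet. The most direct version: if $M$ were uncountable, then since $a \wedge b$ for distinct minimal $a,b$ is a lower bound of two distinct minimal elements, no element of $L$ lies strictly below a minimal element, so $a \wedge b$ must fail to be in $M$ only if $L$ has no element below both — but in a lattice the meet always exists, forcing $a \wedge b$ to equal a common bottom shared by all of $M$; that common value $c$ then satisfies $a \wedge b = c$ for all distinct $a, b \in M$, and Theorem~\ref{free algebras} yields a contradiction. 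Hence $M$ is countable, the root of $\widehat{L}$ has countably many children, and $L = \widehat{L} \setminus \{\widehat{0}\}$ is countable.
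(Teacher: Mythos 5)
There is a genuine gap, and it is located exactly where the real difficulty of the theorem lies. Your entire reduction routes through the set $M$ of minimal elements of $L$ and an adjoined bottom $\widehat{0}$, but a locally finite sublattice of a free lattice need not have \emph{any} minimal elements (e.g.\ a lattice whose order is that of an unbounded chain such as $\mathbb{Z}$, or any $L$ containing an infinite strictly descending sequence with no lower bound in $L$). In that case $M = \emptyset$, the element $\widehat{0}$ is covered by nothing in $\widehat{L}$, the interval $[\widehat{0}, b]$ contains all of $\downarrow_L b$ and is infinite, and no element of $L$ is reachable from $\widehat{0}$ by a finite chain of covers --- so the tree construction of Lemma~\ref{first step} cannot be re-run on $\widehat{L}$ and your verification step ("I must verify finiteness here") fails. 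Separately, your analysis of $M$ is confused: in any lattice two distinct minimal elements $a,b$ would give $a \wedge b \leq a$, hence $a \wedge b = a$ by minimality, hence $a \leq b$ and then $a = b$; so $|M| \leq 1$ always, a minimal element of a lattice is automatically a minimum, and the elaborate argument invoking Theorem~\ref{free algebras} to count $M$ is vacuous. The upshot is that your proposal collapses to: if $L$ has a minimum, apply Lemma~\ref{first step} directly (no adjunction needed); if not, the method says nothing.

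The paper handles the missing case differently. It fixes a maximal antichain $S$ of $L$ (Hausdorff Maximal Principle) and grades $L$ into levels $L_k$, $k \in \mathbb{Z}$, by the length of covering chains up or down from $S$; local finiteness guarantees every element lands in some level. For each $a \in L$ the up-set $\uparrow_L a$ \emph{does} have a unique minimal element, so Lemma~\ref{first step} applies to it (and dually to $\downarrow_L a$), giving countability of all principal up- and down-sets. If $L$ were uncountable, some level $L_k$ would be uncountable; fixing $a \in L_k$, the countable set $S_a = \{a \wedge b : b \in L_k\} \subseteq\; \downarrow_L a$ satisfies $L_k \subseteq \bigcup_{c \in S_a} \uparrow_L c$, forcing some $\uparrow_L c$ to be uncountable --- a contradiction. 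To repair your argument you would need some device of this kind to deal with lattices unbounded below; adjoining a bottom does not supply one.
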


\begin{proof} Let $L$ be a locally finite sublattice of a free lattice. Define $P$ to be the poset of antichains of $L$ that are ordered by set inclusion. By the Hausdorff Maximal Principle applied to $P$, there exists a maximal antichain $S$ in $L$. So define $L_0 = S$. If $k \in \mathbb{N}$, then let $L_k$ be the set of elements $a \in L$ with the following property. The smallest positive integer $n$ such that $a_0 \prec a_1 \prec \dots \prec a_{n-1} \prec a$ in $L$ for some $a_0 \in L_0$ and for some $a_1, a_2, \dots, a_{n-1} \in L$ is $n = k$. Dually, if $k \in \mathbb{N}$, then let $L_{-k}$ be the set of elements $a \in L$ with the following property. The smallest positive integer $n$ such that $a_0 \succ a_1 \succ \dots \succ a_{n-1} \succ a$ in $L$ for some $a_0 \in L_0$ and for some $a_1, a_2, \dots, a_{n-1} \in L$ is $n = k$. Since $S$ is a maximal antichain in $L$, it follows that for all $b \in L \backslash S$, there is an element $a \in S$ such that $b \geq a$ or $b \leq a$. So, because $L$ is locally finite, it follows that for all $b \in L \backslash S$, there is an integer $k \in \mathbb{N}$ such that $b \in L_k$ or $b \in L_{-k}$. \\

For all $k \in \mathbb{Z}$, define $L^*_k = \bigcup_{i \leq k} L_i$. Firstly,
\begin{equation}\label{important}
L^*_k = \bigcup_{a \in L_k} \downarrow_L a. 
\end{equation}
For all $a \in L$, $\uparrow_L a$ is a sublattice of $L$ with exactly one minimal element. Hence, as $L$ is a locally finite sublattice of a free lattice, $\uparrow_L a$ is a locally finite sublattice of a free lattice with exactly one minimal element. So by Lemma \ref{first step}, $\uparrow_L a$ is countable. Hence, by symmetry, $\uparrow_L a$ and $\downarrow_L a$ are countable for all $a \in L$. \\

So suppose that $L$ is uncountable. Then, as $L = \bigcup_{k \in \mathbb{Z}} L^*_k$ and $\mathbb{Z}$ is countable, it follows that there exists an integer $k \in \mathbb{Z}$ such that $L^*_k$ is uncountable. Since $\downarrow_L a$ is countable for all $a \in L_k$, Equation \ref{important} implies, as the countable union of countable sets is countable, that $L_k$ is uncountable. Let $a \in L_k$. Because $L$ is a lattice, $a \wedge b$ exists in $L$ for all $b \in L_k$. Hence, consider the set
$$S_a = \{a \wedge b : b \in L_k \}. $$
Since $S_a \; \subseteq \;\, \downarrow_L a$, $S_a$ is countable because $\downarrow_L a$ is countable. Moreover, as $a \wedge b \leq b$ for all $b \in L_k$,
$$L_k \subseteq \bigcup_{c \in S_a} \uparrow_L c. $$
Since $L_k$ is uncountable and since $S_a$ is countable, the fact the any countable union of countable sets is countable implies that there exists an element $c \in S_a$ such that $\uparrow_L c$ is uncountable. But that is impossible by what we showed above. Hence, $L$ is countable.
\end{proof}

\section*{Acknowledgements}\label{sec:acknow} The author would like to thank Claude Laflamme and Robert Woodrow for their support during the writing of an earlier draft of this note, and the author would like to thank Stephanie van Willigenburg for her support during the writing of the current note.

\bibliographystyle{amsplain}

\begin{thebibliography}{99}

\bibitem{Baldwin et al} J.~Baldwin, J.~Berman, A.~Glass, and W.~Hodges,
\emph{A combinatorial fact about free algebras},
Algebra Universalis, 15, 145-152 (1982).

\bibitem{ILO} B.~Davey and H.~Priestley,
\emph{Introduction to Lattices and Order, Second Edition},
Cambridge University Press (2002).

\bibitem{Erdos and Rado} P.~Erd\"{o}s and R.~Rado, \emph{Intersection theorems for systems of sets},
J. Lond. Math. Soc. 35, 85 - 90 (1960).

\bibitem{Distributive sublattices} F.~Galvin and B.~J\'onsson, 
\emph{Distributive sublattices of a free lattice}, 
Canad. J. Math. 13, 265-272 (1961).

\bibitem{Set theory text} T.~Jech,
\emph{Set Theory, The Third Millenium Edition, revised and expanded},
Springer Monographs in Mathematics (2002).

\bibitem{Semidistributive origins 1} B.~J\'onsson,
\emph{Sublattices of a free lattice},
Canad. J. Math., 13, 256 - 264 (1961).

\bibitem{Semidistributive origins 2} B.~J\'onsson and J.~Kiefer, 
\emph{Finite sublattices of a free lattice},
Canad. J. Math., 14, 487 - 497 (1962).

\bibitem{FL} R.~Freese, J.~Je\v{z}ek, and J.~Nation,
\emph{Free Lattices}
Math. Surveys Monogr. 42 (1995).

\bibitem{PL} R. Freese and J.B. Nation
\textit{Projective Lattices}
Pacific J. Math. 75, 93 - 106 (1978).

\bibitem{Nations proof} J.~Nation, \emph{Finite sublattices of a free lattice}, 
Trans. Amer. Math. Soc. 269 (1) (1982).

\bibitem{Whitman 1} P.~Whitman, 
\emph{Free lattices},
Ann. Math. 42 (1) (1941).

\bibitem{Whitman 2} P.~Whitman,
\emph{Free lattices II},
Ann. Math. 43 (1) (1942).



\end{thebibliography}

\end{document}